\documentclass[12pt]{article}

\usepackage{amsmath}
\usepackage{mathtools}
\usepackage{amsfonts}
\usepackage{accents}
\usepackage[mathscr]{eucal}
\usepackage{amsthm}
\usepackage{amssymb}
\usepackage{enumerate}
\usepackage{cases}
\usepackage{multirow}
\usepackage{subcaption}
\usepackage{graphicx}
\usepackage{pgf,tikz}
\usepackage{color}
\usepackage[bookmarks=true, bookmarksopen=true, bookmarksopenlevel=4]{hyperref}
\definecolor{myblue}{rgb}{0,0,0.6}     %
\hypersetup{pdftitle={Notes on Wang Theorem},
            pdfauthor={Hewett},
     colorlinks=true, linkcolor=myblue,  citecolor=myblue, filecolor=myblue,   urlcolor=myblue,  }
\definecolor{dhcol}{rgb}{0,0.5,0}

\definecolor{sccol}{rgb}{0,0,0.5}

\graphicspath{{figs/}}
\begin{document}
\newcommand{\rf}[1]{(\ref{#1})}
\newcommand{\mmbox}[1]{\fbox{\ensuremath{\displaystyle{ #1 }}}}	%

\newcommand{\hs}[1]{\hspace{#1mm}}
\newcommand{\vs}[1]{\vspace{#1mm}}

\newcommand{\ri}{{\mathrm{i}}}
\newcommand{\re}{{\mathrm{e}}}
\newcommand{\rd}{\mathrm{d}}

\newcommand{\R}{\mathbb{R}}
\newcommand{\Q}{\mathbb{Q}}
\newcommand{\N}{\mathbb{N}}
\newcommand{\Z}{\mathbb{Z}}
\newcommand{\C}{\mathbb{C}}
\newcommand{\K}{{\mathbb{K}}}

\newcommand{\cA}{\mathcal{A}}
\newcommand{\cB}{\mathcal{B}}
\newcommand{\cC}{\mathcal{C}}
\newcommand{\cS}{\mathcal{S}}
\newcommand{\cD}{\mathcal{D}}
\newcommand{\cH}{\mathcal{H}}
\newcommand{\cI}{\mathcal{I}}
\newcommand{\cItilde}{\tilde{\mathcal{I}}}
\newcommand{\cIhat}{\hat{\mathcal{I}}}
\newcommand{\cIcheck}{\check{\mathcal{I}}}
\newcommand{\cIstar}{{\mathcal{I}^*}}
\newcommand{\cJ}{\mathcal{J}}
\newcommand{\cM}{\mathcal{M}}
\newcommand{\cP}{\mathcal{P}}
\newcommand{\cV}{{\mathcal V}}
\newcommand{\cW}{{\mathcal W}}
\newcommand{\scrD}{\mathscr{D}}
\newcommand{\scrS}{\mathscr{S}}
\newcommand{\scrJ}{\mathscr{J}}
\newcommand{\sD}{\mathsf{D}}
\newcommand{\sN}{\mathsf{N}}
\newcommand{\sS}{\mathsf{S}}
 \newcommand{\sT}{\mathsf{T}}
 \newcommand{\sH}{\mathsf{H}}
 \newcommand{\sI}{\mathsf{I}}
 
\newcommand{\bs}[1]{\mathbf{#1}}
\newcommand{\bb}{\mathbf{b}}
\newcommand{\bd}{\mathbf{d}}
\newcommand{\bn}{\mathbf{n}}
\newcommand{\bp}{\mathbf{p}}
\newcommand{\bP}{\mathbf{P}}
\newcommand{\bv}{\mathbf{v}}
\newcommand{\bx}{\mathbf{x}}
\newcommand{\by}{\mathbf{y}}
\newcommand{\bz}{{\mathbf{z}}}
\newcommand{\bxi}{\boldsymbol{\xi}}
\newcommand{\boldeta}{\boldsymbol{\eta}}	%

\newcommand{\ts}{\tilde{s}}
\newcommand{\tGamma}{{\tilde{\Gamma}}}
 \newcommand{\tbx}{\tilde{\bx}}
 \newcommand{\tbd}{\tilde{\bd}}
 \newcommand{\txi}{\xi}
 
\newcommand{\done}[2]{\dfrac{d {#1}}{d {#2}}}
\newcommand{\donet}[2]{\frac{d {#1}}{d {#2}}}
\newcommand{\pdone}[2]{\dfrac{\partial {#1}}{\partial {#2}}}
\newcommand{\pdonet}[2]{\frac{\partial {#1}}{\partial {#2}}}
\newcommand{\pdonetext}[2]{\partial {#1}/\partial {#2}}
\newcommand{\pdtwo}[2]{\dfrac{\partial^2 {#1}}{\partial {#2}^2}}
\newcommand{\pdtwot}[2]{\frac{\partial^2 {#1}}{\partial {#2}^2}}
\newcommand{\pdtwomix}[3]{\dfrac{\partial^2 {#1}}{\partial {#2}\partial {#3}}}
\newcommand{\pdtwomixt}[3]{\frac{\partial^2 {#1}}{\partial {#2}\partial {#3}}}
\newcommand{\bnabla}{\boldsymbol{\nabla}}
\newcommand{\dive}{\boldsymbol{\nabla}\cdot}
\newcommand{\curl}{\boldsymbol{\nabla}\times}
\newcommand{\Phixy}{\Phi(\bx,\by)}
\newcommand{\PhiOxy}{\Phi_0(\bx,\by)}
\newcommand{\dxPhixy}{\pdone{\Phi}{n(\bx)}(\bx,\by)}
\newcommand{\dyPhixy}{\pdone{\Phi}{n(\by)}(\bx,\by)}
\newcommand{\dxPhiOxy}{\pdone{\Phi_0}{n(\bx)}(\bx,\by)}
\newcommand{\dyPhiOxy}{\pdone{\Phi_0}{n(\by)}(\bx,\by)}

\newcommand{\eps}{\varepsilon}
\newcommand{\real}[1]{{\rm Re}\left[#1\right]} %
\newcommand{\imag}[1]{{\rm Im}\left[#1\right]}
\newcommand{\ol}[1]{\overline{#1}}
\newcommand{\ord}[1]{\mathcal{O}\left(#1\right)}
\newcommand{\oord}[1]{o\left(#1\right)}
\newcommand{\Ord}[1]{\Theta\left(#1\right)}

\newcommand{\hsnorm}[1]{||#1||_{H^{s}(\bs{R})}}
\newcommand{\hnorm}[1]{||#1||_{\tilde{H}^{-1/2}((0,1))}}
\newcommand{\norm}[2]{\left\|#1\right\|_{#2}}
\newcommand{\normt}[2]{\|#1\|_{#2}}
\newcommand{\on}[1]{\Vert{#1} \Vert_{1}}
\newcommand{\tn}[1]{\Vert{#1} \Vert_{2}}

\newcommand{\xt}{\mathbf{x},t}
\newcommand{\PhiF}{\Phi_{\rm freq}}
\newcommand{\cone}{{c_{j}^\pm}}
\newcommand{\ctwo}{{c_{2,j}^\pm}}
\newcommand{\cthree}{{c_{3,j}^\pm}}

\newtheorem{thm}{Theorem}[section]
\newtheorem{lem}[thm]{Lemma}
\newtheorem{defn}[thm]{Definition}
\newtheorem{prop}[thm]{Proposition}
\newtheorem{cor}[thm]{Corollary}
\newtheorem{rem}[thm]{Remark}
\newtheorem{conj}[thm]{Conjecture}
\newtheorem{ass}[thm]{Assumption}
\newtheorem{example}[thm]{Example} %

\newcommand{\tH}{\widetilde{H}}
\newcommand{\Hze}{H_{\rm ze}} 	%
\newcommand{\uze}{u_{\rm ze}}		%
\newcommand{\dimH}{{\rm dim_H}}
\newcommand{\dimB}{{\rm dim_B}}
\newcommand{\IntClosOm}{\mathrm{int}(\overline{\Omega})}
\newcommand{\IntClosOmOne}{\mathrm{int}(\overline{\Omega_1})}
\newcommand{\IntClosOmTwo}{\mathrm{int}(\overline{\Omega_2})}
\newcommand{\Ccomp}{C^{\rm comp}}
\newcommand{\tCcomp}{\tilde{C}^{\rm comp}}
\newcommand{\uC}{\underline{C}}
\newcommand{\utC}{\underline{\tilde{C}}}
\newcommand{\oC}{\overline{C}}
\newcommand{\otC}{\overline{\tilde{C}}}
\newcommand{\capcomp}{{\rm cap}^{\rm comp}}
\newcommand{\Capcomp}{{\rm Cap}^{\rm comp}}
\newcommand{\tcapcomp}{\widetilde{{\rm cap}}^{\rm comp}}
\newcommand{\tCapcomp}{\widetilde{{\rm Cap}}^{\rm comp}}
\newcommand{\hcapcomp}{\widehat{{\rm cap}}^{\rm comp}}
\newcommand{\hCapcomp}{\widehat{{\rm Cap}}^{\rm comp}}
\newcommand{\tcap}{\widetilde{{\rm cap}}}
\newcommand{\tCap}{\widetilde{{\rm Cap}}}
\newcommand{\ccap}{{\rm cap}}
\newcommand{\ucap}{\underline{\rm cap}}
\newcommand{\uCap}{\underline{\rm Cap}}
\newcommand{\cCap}{{\rm Cap}}
\newcommand{\ocap}{\overline{\rm cap}}
\newcommand{\oCap}{\overline{\rm Cap}}
\DeclareRobustCommand
{\mathringbig}[1]{\accentset{\smash{\raisebox{-0.1ex}{$\scriptstyle\circ$}}}{#1}\rule{0pt}{2.3ex}}
\newcommand{\cirH}{\mathringbig{H}}
\newcommand{\cirHs}{\mathringbig{H}{}^s}
\newcommand{\cirHt}{\mathringbig{H}{}^t}
\newcommand{\cirHm}{\mathringbig{H}{}^m}
\newcommand{\cirHzero}{\mathringbig{H}{}^0}
\newcommand{\deO}{{\partial\Omega}}
\newcommand{\OO}{{(\Omega)}}
\newcommand{\Rn}{{(\R^n)}}
\newcommand{\Id}{{\mathrm{Id}}}
\newcommand{\gap}{\mathrm{Gap}}
\newcommand{\ggap}{\mathrm{gap}}
\newcommand{\isom}{{\xrightarrow{\sim}}}
\newcommand{\half}{{1/2}}
\newcommand{\mhalf}{{-1/2}}
\newcommand{\inter}{{\mathrm{int}}}

\newcommand{\Hsp}{H^{s,p}}
\newcommand{\Htq}{H^{t,q}}
\newcommand{\tHsp}{{{\widetilde H}^{s,p}}}
\newcommand{\SP}{\ensuremath{(s,p)}}
\newcommand{\Xsp}{X^{s,p}}

\newcommand{\dd}{{d}}\newcommand{\pp}{{p_*}}

\newcommand{\Rnn}{\R^{n_1+n_2}}
\newcommand{\Tr}{{\mathrm{Tr}}}

\renewcommand{\Re}{\textrm{Re}} \renewcommand{\Im}{\textrm{Im}}
\newcommand{\sO}{\mathsf{O}}
\newcommand{\sC}{\mathsf{C}}
\newcommand{\sA}{\mathsf{A}}
\newcommand{\sM}{\mathsf{M}}
\newcommand{\sF}{\mathsf{F}}
\newcommand{\sG}{\mathsf{G}}
\newcommand{\mS}{\Gamma}
\newcommand{\omS}{{\overline{\mS}}}
\newcommand{\sumpm}[1]{\{\!\!\{#1\}\!\!\}}
\newcommand{\bH}{\mathbf{H}}
\newcommand{\bL}{\mathbf{L}}
\newcommand{\bu}{\mathbf{u}}
\newcommand{\cU}{\mathcal{U}}
\newcommand{\cK}{\mathcal{K}}
\newcommand{\cR}{\mathcal{R}}
\newcommand{\cX}{\mathcal{X}}
\newcommand{\cT}{\mathcal{T}}
\newcommand{\weakto}{\rightharpoonup}
\newtheorem{strat}[thm]{Strategy}
\newtheorem{res}[thm]{Results}
\title{A note on the convergence analysis of Laguerre approximations for analytic functions}
\author{Thomas Caussade${}^*$\\ David P. Hewett${}^*$\\[3mm]
${}^*$Department of Mathematics, University College London, London, United Kingdom}

\maketitle
\renewcommand{\thefootnote}{\arabic{footnote}}

\begin{abstract}
In a recent paper (H. Wang, Math.\ Comp.\ 93, 2861-2884, 2024), Wang has presented a number of results concerning the convergence of approximations based on generalised Laguerre polynomials, claiming to have provided ``the first rigorous proof of root-exponential convergence of Laguerre
approximations for analytic functions''. In this note we argue that the proofs of the main results of Wang's paper are incomplete, because they rely on taking a limit under an integral sign, and this is not properly justified. We explain how the proofs could be completed using the dominated convergence theorem, provided that certain nontrivial inequalities involving generalised Laguerre polynomials and their weighted Cauchy transforms hold. We make a related conjecture, which, if true, would complete the proofs of Wang's results. This conjecture remains unproven, but appears plausible, based on numerical experiments.

\end{abstract}

\paragraph{MSC2020 classifications:}
41A25, 
41A10, 
41A05,
41A55

\section{Introduction}
\label{s:Intro}

The approximation of functions and integrals on a half-line using generalised Laguerre polynomials and related functions is a classical topic in approximation theory and numerical analysis. However, technical difficulties associated with the unbounded nature of the half-line have meant that the rigorous convergence theory for Laguerre approximations is less well developed than that for the analogous Legendre/Chebyshev/Jacobi approximations on bounded intervals.\footnote{For some historical background of Laguerre approximation theory we refer the reader to \cite{wang_laguerre_2024}.} 
In a recent paper \cite{wang_laguerre_2024}, 
Wang claims to present ``a comprehensive convergence rate analysis of Laguerre spectral approximations for analytic functions'', covering coefficient and error estimates for projection and interpolation, and error estimates for spectral differentiation and Gauss-Laguerre quadrature. For these problems, \cite{wang_laguerre_2024} claims to rigorously prove root-exponential convergence as the degree of the polynomials (or the number of quadrature points) increases, for functions analytic inside a parabola enclosing the half-line, and satisfying an algebraic growth condition at infinity. While we expect that the statements of the results in \cite{wang_laguerre_2024} may be correct, we contend that the proofs presented in \cite{wang_laguerre_2024} are incomplete, because they all rely on taking a non-uniform limit under an integral sign, which is nontrivial and not properly justified in \cite{wang_laguerre_2024}. 
Hence, we believe that providing a rigorous proof of root-exponential convergence of Laguerre
approximations for the general class of analytic functions  
considered in \cite{wang_laguerre_2024}
remains an open problem. 
In this note we explain the problem with the proofs in 
\cite{wang_laguerre_2024}, and state a conjecture (Conjecture \ref{Conj}), the proof of which would complete 
the analysis of \cite{wang_laguerre_2024}. 

\section{Notation and terminology}
We start by briefly reviewing some notation and terminology from \cite{wang_laguerre_2024}. Given $n\in\N_0$, let $\mathbb{P}_n$ denote the space of polynomials of degree at most $n$, and $\mathbb{Q}_n:=\{\re^{-x/2}u(x):u\in\mathbb{P}_n\}$. 
Given $\alpha>-1$ (this will be our assumption on $\alpha$ throughout) and $n\in \N_0$ let $L^{(\alpha)}_n(z)\in \mathbb{P}_n$ denote the generalised Laguerre polynomial (GLP), which can be expressed as
\begin{align}
\label{e:GLPDef}
L^{(\alpha)}_n(z) = \frac{(\alpha+1)_n}{n!}M(-n,\alpha+1;-z), \qquad z\in\C, 
\end{align}
where $(\cdot)_n$ is the Pochhammer symbol and $M(\cdot,\cdot;\cdot)$ is the confluent hypergeometric function of the first kind (see \cite[\S13.2]{NIST:DLMF}). 
Let $\R_+:=[0,\infty)$, $\omega_\alpha(x):=x^\alpha \re^{-x}$ and 
$L^2_{\omega_\alpha}(\R_+):=\{f:\int_{\R_+}\omega_\alpha(x)|f(x)|^2\,\rd x<\infty\}$, and 
define the weighted Cauchy transform $\Phi^{(\alpha)}_n(z)$ by
\begin{align}
\label{e:PhiDef}
\Phi^{(\alpha)}_n(z):=\frac{1}{2\pi \ri}\int_{\R_+} \frac{\omega_\alpha(x)L^{(\alpha)}_n(x)}{z-x}\,\rd x, \qquad z\in \C\setminus\R_+.
\end{align}
We recall from \cite[Lemma 2.1]{wang_laguerre_2024} that $\Phi^{(\alpha)}_n(z)$ is analytic in the cut plane $\C\setminus\R_+$, satisfies 
\begin{align}
\label{e:PhiAsympt1}
\Phi^{(\alpha)}_n(z)=O(z^{-n-1}), \qquad z\to\infty,
\end{align}
and can be represented as
\begin{align}
\label{e:PhiRep}
\Phi^{(\alpha)}_n(z) = \frac{\ri}{2\pi}\Gamma(n+\alpha+1)U(n+1,1-\alpha;-z), \qquad z\in \C\setminus\R_+,
\end{align}
where $U(\cdot,\cdot;\cdot)$ is the confluent hypergeometric function of the second kind (see \cite[\S13.2]{NIST:DLMF}). 
A proof of \eqref{e:PhiRep} was presented in \cite[Lemma 2.1]{wang_laguerre_2024} using an application of L'H\^{o}pital's rule, the details of which 
were not provided. In Appendix \ref{s:PhiRep} we provide an alternative, more direct proof of \eqref{e:PhiRep}. 

Given $\rho>0$, let the parabola $P_\rho$ be defined by $P_\rho:=\{z\in\C:\, \Re[\sqrt{-z}\,]=\rho\}$, and let $D_\rho:=\{z\in\C:\, \Re[\sqrt{-z}\,]<\rho\}$ be the ``interior'' of this parabola, 
i.e.\ the open set containing $\R_+$ whose boundary is $P_\rho$. Here, and throughout this note, a fractional power of a complex variable is defined by its principal branch. 
See Figure \ref{f:Parabola} for an illustration of $P_\rho$ and $D_\rho$. 
In Cartesian form $P_\rho$ is described by the equation 
\begin{align}
\label{e:PrhoCart}
\Re[z]=\Im[z]^2/(4\rho^2)-\rho^2,
\end{align}
and, following \cite{wang_laguerre_2024}, we shall sometimes parametrise $P_\rho$ by
\begin{align}
\label{e:WangParam}
z(t)&=t^2-\rho^2-2t\rho \ri, \qquad t\in\R,
\end{align}
for which 
\begin{align}
|z(t)|=t^2+\rho^2,\qquad
z'(t)=2t-2\rho\ri,\qquad
|z'(t)|=2\sqrt{t^2+\rho^2}
,\qquad t\in\R.
\end{align}

\begin{figure}
\centering
\includegraphics[height=45mm]{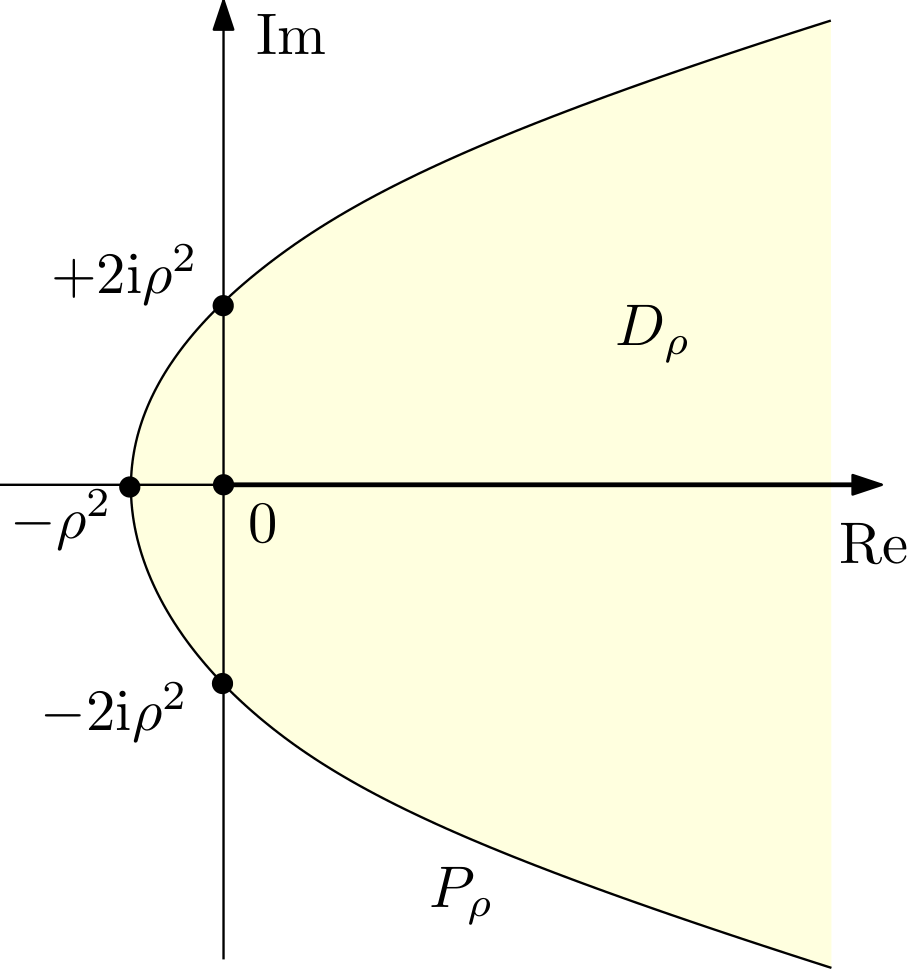}
\caption{The sets $P_\rho$ and $D_\rho$
\label{f:Parabola}}
\end{figure}

\section{Main results of \cite{wang_laguerre_2024} and proof strategy}
\label{s:Strategy}

The aim of \cite{wang_laguerre_2024} was to provide convergence results for certain quantities relating to approximation and quadrature using GLPs for a given function $f:\R_+\to\C$. 
The basic regularity assumption in \cite{wang_laguerre_2024} is that $f(z)$ (or $\re^{z/2}f(z)$ for certain results) satisfies the following condition:
\begin{align}
\label{e:fCond}
\text{$\exists\rho>0$ and 
$\exists\beta\in\R$ such that 
$f$ is analytic on $\overline{D_\rho}$ and $f(z)=O(|z|^\beta)$ as $z\to \infty$ in $\overline{D_\rho}$.}
\end{align}
With $n$ denoting the maximum degree of the approximating GLPs, or one minus the number of quadrature points in the Gauss-Laguerre quadrature, the main results of \cite{wang_laguerre_2024} all claim to show that a certain quantity $F_n(f)$ (depending on $f$ and $n$) decays root-exponentially as $n\to\infty$, with
\begin{align}
\label{e:FBound1}
\text{$F_n(f)\lessapprox Cn^\gamma\re^{-c\rho\sqrt{n}}\cK$,\,\, as $n\to\infty$,}
\end{align}
for some constants $C>0$, $\gamma\in \R$ and $c>0$, independent of $f$ and $n$, and an integral $\cK$ 
of the form
\begin{align}
\label{e:KDef}
\cK=
\int_{P_\rho}|\re^{-a z}||z|^b |f(z)|\,\rd s,
\end{align}
for some $a,b\in\R$ such that $\cK<\infty$.  
In \eqref{e:FBound1}, and henceforth, the notation ``$A_n\lessapprox C B_n$, as $n\to\infty$'' means that ``for all $C_0>C$ there exists $n_0\in\N$ such that $A_n\leq C_0B_n$ for all $n\geq n_0$''.

Specifically, our focus in this note is on the following key results from \cite{wang_laguerre_2024}.

\begin{res}
\label{r:WangRes}
The following results from \cite{wang_laguerre_2024} all claim  
bounds of the form \eqref{e:FBound1}:
\begin{itemize}
\item[(i)] \cite[Theorem 3.4]{wang_laguerre_2024}: 
Let $f$ satisfy \eqref{e:fCond}. 
Let $(\Pi^P_n f)(x):=\sum_{k=0}^n a^{(\alpha)}_k L^{(\alpha)}_k(x)$ denote the orthogonal projection of $f$ onto $\mathbb{P}_n$ in $L^2_{\omega_\alpha}(\R_+)$, with 
$a^{(\alpha)}_k=\frac{k!}{\Gamma(k+\alpha+1)}\int_{\R_+}\omega_\alpha(x)f(x)L^{(\alpha)}_k(x)\,\rd x$. 
Then 
\begin{align}
 \label{e:Ri1}
\text{$|a^{(\alpha)}_n|\lessapprox  \frac{1}{2\sqrt{\pi}}n^{-\alpha/2-1/4}\re^{-2\rho\sqrt{n}}
\int_{P_\rho}|\re^{-z/2}||z|^{\alpha/2-1/4}|f(z)|\,\rd s$,
\quad  as $n\to\infty$,}  
\end{align} 
and
\begin{align}
 \label{e:Ri2}
\text{$\|f-\Pi^P_nf\|_{\omega_\alpha}\lessapprox  \frac{1}{2\sqrt{2\pi\rho}} \re^{-2\rho\sqrt{n}}
\int_{P_\rho}|\re^{-z/2}||z|^{\alpha/2-1/4}|f(z)|\,\rd s$,
\quad as $n\to\infty$.} 
 \end{align}  
\item[(ii)] \cite[Theorem 4.2]{wang_laguerre_2024}: 
Let $f$ satisfy \eqref{e:fCond}. Let $p_n\in\mathbb{P}_n$ denote the interpolant of $f$ using the Laguerre points (the zeros of $L^{(\alpha)}_{n+1}(z)$) and $p^R_n\in\mathbb{P}_n$ the interpolant of $f$ using the Laguerre-Radau points (the zeros of $zL^{(\alpha+1)}_n(z)$). 
Then 
\begin{align}
 \label{e:Rii1} 
 \text{$\|f-p_n\|_{\omega_\alpha}\lessapprox \frac{1}{\sqrt{\pi}\,\rho^2}n^{1/4}\re^{-2\rho\sqrt{n}}
\int_{P_\rho}|\re^{-z/2 }||z|^{\alpha/2+1/4} |f(z)|\,\rd s$,
\quad as $n\to\infty$,} 
 \end{align}
and 
\begin{align}
 \label{e:Rii2} 
 \text{$\|f-p^R_n\|_{\omega_\alpha}\lessapprox \frac{\sqrt{2}}{\sqrt{\pi}\,\rho^2}n^{3/4}\re^{-2\rho\sqrt{n}}
\int_{P_\rho}|\re^{-z/2}||z|^{\alpha/2-1/4} |f(z)|\,\rd s$,
\quad as $n\to\infty$,} 
 \end{align}
\item[(iii)] \cite[Theorem 4.3]{wang_laguerre_2024}: 
Let $\re^{z/2}f(z)$ satisfy \eqref{e:fCond}. 
Let $q_n\in\mathbb{Q}_n$ denote the interpolant of $f$ using the Laguerre points (the zeros of $L^{(\alpha)}_{n+1}(z)$) and $p^R_n\in\mathbb{Q}_n$ the interpolant of $f$ using the Laguerre-Radau points (the zeros of $zL^{(\alpha+1)}_n(z)$). 
Then 
\begin{align}
 \label{e:Riii1} \text{$\|f-q_n\|_{\infty}\lessapprox \frac{\kappa_\alpha}{\sqrt{\pi}\rho^2}\,n^{|\alpha|/2+1/4}\re^{-2\rho\sqrt{n}} 
\int_{P_\rho}|z|^{\alpha/2+1/4} |f(z)|\,\rd s$,
\quad as $n\to\infty$,} 
\end{align}
and 
\begin{align}
 \label{e:Riii2} 
 \text{$\|f-q^R_n\|_{\infty}\lessapprox \frac{2\kappa_\alpha}{\sqrt{\pi}\rho^2}\, n^{|\alpha|/2+3/4}\re^{-2\rho\sqrt{n}}
\int_{P_\rho}|z|^{\alpha/2-1/4} |f(z)|\,\rd s$,
\quad as $n\to\infty$,} 
\end{align}
where 
$\kappa_\alpha:=1/\Gamma(\alpha+1)$ if $\alpha\geq 0$ and $\kappa_\alpha:=2$ if $\alpha\in(-1,0)$.
\footnote{Our bound \eqref{e:Riii2} in the case $\alpha\in(-1,0)$ differs by a factor of $2$ from that in \cite[Equation (4.8)]{wang_laguerre_2024}.}

\item[(iv)] \cite[Theorem 6.3]{wang_laguerre_2024}: 
Let $f$ satisfy \eqref{e:fCond}. Let $Q_n$ 
denote either the $(n+1)$-point Gauss-Laguerre quadrature approximation or the $(n+1)$-point Gauss-Laguerre-Radau quadrature approximation of the integral $I:=\int_{\R_+}\omega_\alpha(x)f(x)\,\rd x$. Then
\begin{align}
 \label{e:Riv}  \text{$|I-Q_n|\lessapprox \re^{-4\rho\sqrt{n}} 
\int_{P_\rho}|\re^{-z}||z|^{\alpha} |f(z)|\,\rd s$,
\quad as $n\to\infty$.} 
\end{align}
\end{itemize}
\end{res}

For each result in Results \ref{r:WangRes} the strategy in \cite{wang_laguerre_2024} for proving \eqref{e:FBound1} 
is as follows. 
\begin{strat}
\label{st:Strat}
The strategy in \cite{wang_laguerre_2024} for proving bounds of the form \eqref{e:FBound1} 
comprises four main steps: 
\begin{itemize}
\item[1:] Show that $F_n(f)$ satisfies 
\begin{align}
\label{e:FBound2}
F_n(f)
\lessapprox
Cn^{\gamma}
\re^{-c\rho\sqrt{n}}\int_{P_\rho} g_n(z)\,\rd s,
\quad \text{as }n\to\infty,
\end{align}
for 
some $C>0$, $\gamma\in\R$, $c>0$, and 
some sequence of nonnegative functions $g_n(z)$, such that $\int_{P_\rho} g_n(z)\,\rd s<\infty$ for sufficiently large $n$. 
\item[2:] Show that 
\begin{align}
\label{e:gng}
g_n(z)\to g(z), \qquad n\to \infty, 
\end{align}
for each $z\in P_\rho$, for some nonnegative function $g(z)$ such that $\int_{P_\rho} g(z)\,\rd s<\infty$. 
\item[3:] Deduce from Step 2 that
\begin{align}
\label{e:Intgng}
\int_{P_\rho} g_n(z)\,\rd s\to \int_{P_\rho} g(z)\,\rd s, \qquad n\to \infty. 
\end{align}
\item[4:] Combine Steps 1 and 3 to obtain \eqref{e:FBound1}, with 
$\cK=\int_{P_\rho} g(z)\,\rd s$. 
\end{itemize}
\end{strat}

One elementary thing that is missing from the proofs of the results in \cite{wang_laguerre_2024} listed 
in Results \ref{r:WangRes}(ii)-(iv)
relates to the justification of Step 1 in Strategy \ref{st:Strat}. For this step, the proofs of 
\cite[Theorems 4.2, 4.3 \& 6.3]{wang_laguerre_2024} all rely on \cite[Lemma 4.1]{wang_laguerre_2024}, which provides a formula for an interpolation error in terms of a contour integral over $P_\rho$. However, \cite[Lemma 4.1]{wang_laguerre_2024} only provides this formula for $f$ satisfying \eqref{e:fCond} with $\beta<1/2$. In Lemma \ref{l:intlag} in Appendix \ref{s:Lem4p1} we remedy this, by extending the statement of \cite[Lemma 4.1]{wang_laguerre_2024} to general $\beta\in\R$. 

The more serious omission from 
the proofs 
of all the 
results 
in 
\cite{wang_laguerre_2024} 
listed 
in Results \ref{r:WangRes}
is a rigorous justification 
of 
Step 3 in Strategy \ref{st:Strat}. As is well known, pointwise convergence \eqref{e:gng} of integrands is not in general sufficient to guarantee convergence \eqref{e:Intgng} of integrals %
(recall the classic ``escaping room'' counterexample). 
We believe that the absence of a rigorous justification of \eqref{e:Intgng} represents a nontrivial gap in the proofs presented in \cite{wang_laguerre_2024}.  

One sufficient condition for the validity of \eqref{e:Intgng} 
would be that the convergence \eqref{e:gng} held with $g_n(z)=g(z)(1+o(1))$, with $o(1)$ denoting a relative error that tends to zero uniformly on $P_\rho$ as $n\to \infty$. However, 
as we now explain, 
this does not hold for the 
cases 
in Results \ref{r:WangRes}.

\section{Pointwise convergence results}
\label{s:Pointwise}

The definitions of $g_n$ and $g$ for the results from \cite{wang_laguerre_2024} listed in Results \ref{r:WangRes} are given in Table \ref{t:gng}. 
(For brevity we do not reproduce the arguments justifying \eqref{e:FBound2} in each case - for this we refer the reader to \cite{wang_laguerre_2024}.) 
In each case, the validity of the pointwise convergence \eqref{e:gng} 
is justified using one or both of the following fundamental results:
\begin{itemize}
\item 
``Perron's formula'' for the large $n$ asymptotics of the GLPs (cited in \cite{wang_laguerre_2024} from \cite[Theorem 8.22.3]{szego1939orthogonal}), which implies that
\begin{align}
\label{e:Perron}
\frac{n^{\alpha/2-1/4}\re^{2\sqrt{-nz}}}{
2\sqrt{\pi}
L^{(\alpha)}_n(z)}\to 
\re^{-z/2}(-z)^{\alpha/2+1/4}, \qquad n\to\infty,\,z\in \C\setminus\R_+.
\end{align}
\item The large $n$ asymptotic behaviour of the Cauchy transform (stated as \cite[Formula (2.10)]{wang_laguerre_2024}, and derived by combining \eqref{e:PhiRep} with \cite[Equation (10.3.37)]{temme2014asymptotic} and \cite[Equation (10.40.2)]{NIST:DLMF}), viz.
\begin{align}
\label{e:PhiAsympt2}
-\ri 2\sqrt{\pi}
n^{-\alpha/2+1/4}\re^{2 \sqrt{-nz}}\Phi^{(\alpha)}_n(z)\to 
\re^{-z/2}(-z)^{\alpha/2-1/4}, \qquad n\to\infty, \,z\in \C\setminus\R_+.
\end{align}
\end{itemize}
Specifically, \cite[Theorem 3.4]{wang_laguerre_2024} uses \eqref{e:PhiAsympt2}; 
\cite[Theorem 4.2]{wang_laguerre_2024} uses 
\eqref{e:Perron}; 
\cite[Theorem 4.3]{wang_laguerre_2024} uses $\re^{z/2}$ times 
\eqref{e:Perron}; and
\cite[Theorem 6.3]{wang_laguerre_2024} uses the product of \eqref{e:Perron} and \eqref{e:PhiAsympt2}. In each case, one takes the modulus of the mentioned results, noting that $|\re^{2\sqrt{-nz}}|=\re^{2\Re\{\sqrt{-nz}\}}=\re^{2\rho\sqrt{n}}$ for $z\in P_\rho$. 

\begingroup 

\begin{table}[t]
\renewcommand{\arraystretch}{2.1}
\centering
\begin{tabular}{|c|c|c|}
\hline
Result & $g_n(z)$ & $g(z)$\\
\hline
\begin{tabular}{c}
\cite[Theorem 3.4]{wang_laguerre_2024} 
\\[-5mm]
i.e.\ \eqref{e:Ri1} and \eqref{e:Ri2}
\end{tabular}
& $
2\sqrt{\pi}
n^{-\alpha/2+1/4}\re^{2 \rho \sqrt{n}}|\Phi^{(\alpha)}_n(z)||f(z)|$ 
& 
$
|\re^{-z/2}||z|^{\alpha/2-1/4}|f(z)|$\\[2mm]
\hline

\begin{tabular}{c}
\cite[Theorem 4.2, Equation (4.2)]{wang_laguerre_2024} 
\\[-5mm]
i.e.\ \eqref{e:Rii1}
\end{tabular}
& $\dfrac{n^{\alpha/2-1/4}\re^{2 \rho \sqrt{n}}|f(z)|}{
2\sqrt{\pi}
|L^{(\alpha)}_{n+1}(z)|}$ 
& 
$
|\re^{-z/2}||z|^{\alpha/2+1/4}|f(z)|$\\[3mm]
\hline

\begin{tabular}{c}
\cite[Theorem 4.2, Equation (4.3)]{wang_laguerre_2024} 
\\[-5mm]
i.e.\ \eqref{e:Rii2}
\end{tabular}
& $\dfrac{n^{\alpha/2+1/4}\re^{2 \rho \sqrt{n}}|f(z)|}{2\sqrt{\pi}|zL^{(\alpha+1)}_{n}(z)|}$ 
& 
$
|\re^{-z/2}||z|^{\alpha/2-1/4}|f(z)|$\\[3mm]
\hline

\begin{tabular}{c}
\cite[Theorem 4.3, Equation (4.7)]{wang_laguerre_2024} 
\\[-5mm]
i.e.\ \eqref{e:Riii1}
\end{tabular}
& $\dfrac{n^{\alpha/2-1/4}\re^{2 \rho \sqrt{n}}|\re^{z/2}||f(z)|}{
2\sqrt{\pi}
|L^{(\alpha)}_{n+1}(z)|}$ 
& 
$
|z|^{\alpha/2+1/4}|f(z)|$
\\[3mm]
\hline

\begin{tabular}{c}
\cite[Theorem 4.3, Equation (4.8)]{wang_laguerre_2024} 
\\[-5mm]
i.e.\ \eqref{e:Riii2}
\end{tabular}
& $\dfrac{n^{\alpha/2+1/4}\re^{2 \rho \sqrt{n}}|\re^{z/2}||f(z)|}{
2\sqrt{\pi}
|zL^{(\alpha+1)}_n(z)|}$ 
& 
$
|z|^{\alpha/2-1/4}|f(z)|$
\\[3mm]
\hline
\begin{tabular}{c}
\cite[Theorem 6.3]{wang_laguerre_2024} 
\\[-5mm]
i.e.\ \eqref{e:Riv}
\end{tabular}
& $\dfrac{\re^{4 \rho \sqrt{n}}|\Phi^{(\alpha)}_{n+1}(z)||f(z)|}{|L^{(\alpha)}_{n+1}(z)|}$ 
& 
$|\re^{-z}||z|^{\alpha}|f(z)|$
\\[3mm]
\hline
\end{tabular}
\caption{The functions $g_n(z)$ and $g(z)$ for each of the results in Results 2.1. 
\label{t:gng}
}
\end{table}
\endgroup 

According to the sources from which they are derived, both \eqref{e:Perron} and \eqref{e:PhiAsympt2} hold pointwise in $\C\setminus\R_+$ and uniformly in compact subsets of $\C\setminus\R_+$. But it is clear that they cannot hold uniformly on unbounded subsets of $\C\setminus\R_+$ (such as $P_\rho$) because the exponential dependence on $z$ of the right-hand sides of \eqref{e:Perron} and \eqref{e:PhiAsympt2} is incompatible with the algebraic large $z$ behaviour of $L^{(\alpha)}_n(z)$ (which is a polynomial) and $\Phi^{(\alpha)}_n(z)$ (which decays like $|z|^{-n-1}$, see \eqref{e:PhiAsympt2}).\footnote{In \cite[Thm 8.22.3]{szego1939orthogonal} it is actually stated that the Perron formula leading to \eqref{e:Perron} holds with a relative error that tends to zero uniformly ``in any closed domain with no points in common with $z\geq 0$''. However, as we have explained, this cannot be true as it contradicts the polynomial nature of $L^{(\alpha)}_n(z)$. Presumably, 
Szeg\"o meant to say ``in any closed \textit{bounded} domain with no points in common with $z\geq 0$''.} 
Hence, for the results from \cite{wang_laguerre_2024} listed in Results \ref{r:WangRes}, \eqref{e:Intgng}  in Step 3 of Strategy \ref{st:Strat} is not justified by \eqref{e:Perron} and \eqref{e:PhiAsympt2} alone, and further justification is required to make the proofs complete.

\section{Completing the analysis of \cite{wang_laguerre_2024}}

In order to rigorously justify \eqref{e:Intgng} 
one could appeal to the dominated convergence theorem. For this, one would need to prove the existence of a nonnegative function $h(z)$ 
and an $n_0\in \N$ such that
\begin{align}
\label{e:hConds}
\int_{P_\rho}h(z)\, \rd s<\infty \qquad \text{and} \qquad g_n(z)\leq h(z), \,z\in P_\rho, \, n\geq n_0. 
\end{align}

The following theorem provides sufficient conditions under which \eqref{e:hConds} holds for the problems from \cite{wang_laguerre_2024} listed in Results \ref{r:WangRes}. 
We note that if both \eqref{e:ConjPhi} and \eqref{e:ConjL} hold then \eqref{e:ConjRatio} holds automatically.

\begin{thm}
\label{t:Ass}
Fix $\rho>0$ and $\alpha>-1$. 
\begin{itemize}
\item[(i)]
Suppose that 
\begin{align}
\label{e:ConjPhi}
\forall\gamma\in\R \,\,\exists n_0\in\N \,\,\text{ such that }\,\,\sup_{n\geq n_0}\sup_{z\in P_\rho} n^{-\alpha/2+1/4}\re^{2\rho\sqrt{n}}|\Phi^{(\alpha)}_{n}(z)||z|^{\gamma}<\infty.
\end{align}
Then \cite[Theorem 3.4]{wang_laguerre_2024} (cf.\ Results \ref{r:WangRes}(i)) holds.
\item[(ii)]
Suppose that 
\begin{align}
\label{e:ConjL}
\forall\gamma\in\R \,\,\exists n_0\in\N \,\,\text{ such that }\,\,\sup_{n\geq n_0}\sup_{z\in P_\rho} \frac{n^{\alpha/2-1/4}\re^{2\rho\sqrt{n}}|z|^{\gamma}}{|L^{(\alpha)}_{n}(z)|}<\infty.
\end{align}
Then \cite[Theorems 4.2 and 4.3]{wang_laguerre_2024} (cf.\ Results \ref{r:WangRes}(ii)-(iii)) hold.
\item[(iii)]
Suppose that 
\begin{align}
\label{e:ConjRatio}
\forall\gamma\in\R \,\,\exists n_0\in\N \,\,\text{ such that }\,\,\sup_{n\geq n_0}\sup_{z\in P_\rho} \frac{\re^{4\rho\sqrt{n}}|\Phi^{(\alpha)}_{n}(z)||z|^{\gamma}}{|L^{(\alpha)}_{n}(z)|}<\infty.
\end{align}
Then \cite[Theorem 6.3]{wang_laguerre_2024} (cf.\ Results \ref{r:WangRes}(iv)) holds.
\end{itemize}
\end{thm}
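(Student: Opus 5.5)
The plan is to verify, case by case, the domination hypothesis \eqref{e:hConds} for each pair $(g_n,g)$ in Table \ref{t:gng}, and then run Strategy \ref{st:Strat} with Step 3 justified by the dominated convergence theorem. Once \eqref{e:hConds} holds, Steps 1, 2 and 4 are exactly as in \cite{wang_laguerre_2024}. Step 1 uses \eqref{e:FBound2}, with the interpolation-error representation from Lemma \ref{l:intlag} valid for all $\beta\in\R$. Step 2 uses \eqref{e:Perron} and \eqref{e:PhiAsympt2}. So the work reduces to constructing an integrable majorant $h$ on $P_\rho$.

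The key observation is that, by \eqref{e:fCond}, $|f(z)|\le C_f(1+|z|)^{\beta}$ on $P_\rho$, since $f$ is continuous on $\overline{D_\rho}$ and $O(|z|^\beta)$ at infinity. Moreover, using the parametrisation \eqref{e:WangParam}, $\rd s=|z'(t)|\,\rd t=2\sqrt{t^2+\rho^2}\,\rd t$ and $|z|=t^2+\rho^2$. Hence $\int_{P_\rho}|z|^{-\delta}\,\rd s<\infty$ whenever $\delta>1$. The exponential factors $|\re^{\pm z/2}|$ are not dominated by powers, but they appear only in the Theorem 3.4 and 4.2 cases, where they must be absorbed differently; see below.

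For (i), in the row for \cite[Theorem 3.4]{wang_laguerre_2024}, apply \eqref{e:ConjPhi} with $\gamma=\beta+2$. This gives $g_n(z)\le C\,|z|^{-\beta-2}|f(z)|\le C'|z|^{-2}$ for $n\ge n_0$ and $|z|\ge\rho^2$, which is integrable. Note that $|z|\ge\rho^2$ on $P_\rho$, so $|z|$ is bounded below and there is no problem near the vertex. The factor $n^{-\alpha/2+1/4}$ in $g_n$ matches the one in \eqref{e:ConjPhi}.

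For (ii), in the rows for Theorems 4.2 and 4.3, I apply \eqref{e:ConjL} with $n+1$ in place of $n$ for $L^{(\alpha)}_{n+1}$. The ratio $(n/(n+1))^{\alpha/2-1/4}$ and $\re^{2\rho(\sqrt n-\sqrt{n+1})}$ are bounded, so $n^{\alpha/2-1/4}\re^{2\rho\sqrt n}|z|^\gamma/|L^{(\alpha)}_{n+1}(z)|$ is bounded. For Radau, I apply \eqref{e:ConjL} with $\alpha+1$ in place of $\alpha$. The theorem fixes $\alpha$, so strictly one needs the hypothesis for $\alpha+1$ as well; I would state that this is how (ii) is to be read, or that one assumes \eqref{e:ConjL} for both $\alpha$ and $\alpha+1$. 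The powers of $n$ then match: $n^{\alpha/2+1/4}=n^{(\alpha+1)/2-1/4}$.

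The factor $|\re^{z/2}|$ appearing in the Theorem 4.3 rows is handled using the hypothesis of that theorem. There $\re^{z/2}f(z)$ satisfies \eqref{e:fCond}, so $|\re^{z/2}f(z)|\le C(1+|z|)^\beta$. In the Theorem 4.2 rows, $f$ itself satisfies \eqref{e:fCond} and no exponential factor appears in $g_n$. In both situations, choosing $\gamma=\beta+2$ (or $\beta+3$ when there is an extra $1/|z|$, which only helps) gives the majorant $C|z|^{-2}$.

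For (iii), the Theorem 6.3 row is $\re^{4\rho\sqrt n}|\Phi^{(\alpha)}_{n+1}||f|/|L^{(\alpha)}_{n+1}|$. Applying \eqref{e:ConjRatio} at index $n+1$ again shifts $\re^{4\rho\sqrt n}$ only by a bounded factor, and the same choice of $\gamma$ concludes. The Radau variant of the quadrature has $L^{(\alpha+1)}_n$ and the corresponding Cauchy transform, and is treated with the analogous $\alpha+1$ reading.

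Finally, dominated convergence gives \eqref{e:Intgng}, and Step 4 yields \eqref{e:FBound1} with $\cK=\int_{P_\rho}g\,\rd s$, finite because $g\le h$. The main obstacle I anticipate is bookkeeping rather than analysis: matching the exact $n$-powers and indices ($n$ versus $n+1$, and $\alpha$ versus $\alpha+1$) in each row so that the hypotheses \eqref{e:ConjPhi}–\eqref{e:ConjRatio} apply verbatim.
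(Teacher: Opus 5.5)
Your proposal is correct and is essentially the paper's proof: dominated convergence with the majorant $h(z)=C|z|^{-2}$ on $P_\rho$, obtained by taking $\gamma=\beta+2$ in the relevant hypothesis and using $|f(z)|\le C|z|^{\beta}$ on $P_\rho$ (or the same bound for $|\re^{z/2}f(z)|$ in the Theorem 4.3 rows of Table \ref{t:gng}). Your extra care over the $n\to n+1$ index shift (only a bounded factor) refines the paper's ``follow similarly'', as does your observation that the Radau cases involve $L^{(\alpha+1)}_n$ and $\Phi^{(\alpha+1)}_n$, so they need the hypotheses at $\alpha+1$ too, which is not evidently implied by the hypotheses at $\alpha$; your only slip is the early aside placing exponential factors in the Theorem 3.4 and 4.2 rows, since there they appear in $g$ rather than $g_n$, and in any case $|\re^{-z/2}|\le\re^{\rho^2/2}$ on $P_\rho$.
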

\begin{proof}
As explained above, in order to complete the proofs of the mentioned results from \cite{wang_laguerre_2024} using the dominated convergence theorem we need to show that, for each corresponding choice of $g_n(z)$ in Table \ref{t:gng}, there exists a nonnegative function $h(z)$ and an $n_0\in \N$ satisfying \eqref{e:hConds}. 

Let us first prove part (i), which relates to \cite[Theorem 3.4]{wang_laguerre_2024} (cf.\ Results \ref{r:WangRes}(i)). As in the hypothesis of \cite[Theorem 3.4]{wang_laguerre_2024}, suppose that $f$ satisfies 
\eqref{e:fCond} for some $\beta\in\R$. Then there exists $C>0$ such that $|f(z)|\leq C |z|^{\beta}$ for $z\in P_\rho$. If \eqref{e:ConjPhi} holds, then, taking $\gamma=\beta+2$, there exist $n_0\in\N$ and  $C_\infty>0$ such that the function $g_n(z)$ in Table \ref{t:gng} corresponding to \cite[Theorem 3.4]{wang_laguerre_2024} satisfies $|g_n(z)|\leq C C_\infty |z|^{-2}$ for all $n\geq n_0$ and $z\in P_\rho$. Noting that $\int_{P_\rho}|z|^{-2}\,\rd s<\infty$, we see that \eqref{e:hConds} holds with $h(z):=C C_\infty |z|^{-2}$. 

The proofs of parts (ii) and (iii) follow similarly, noting that, in part (ii), when proving \cite[Theorem 4.3]{wang_laguerre_2024}, one needs to modify the assumption on $f$, to assume that $\re^{z/2}f(z)$ satisfies \eqref{e:fCond}. 
\end{proof}

As yet we have been unable to rigorously prove the conditions \eqref{e:ConjPhi}-\eqref{e:ConjRatio} for any choice of $\rho>0$ and $\alpha>-1$. However, informed by numerical experimentation we make the following conjecture. 
We note the second part of the conjecture is stronger than the first, since  \eqref{e:ConjPhi2}, \eqref{e:ConjL2} and \eqref{e:ConjRatio2} imply  \eqref{e:ConjPhi}, \eqref{e:ConjL} and \eqref{e:ConjRatio} with $n_0=\lceil\gamma -1 \rceil$, $n_0=\lceil\gamma \rceil$ and $n_0= \lceil(\gamma-1)/2 \rceil$, respectively.

\begin{conj}
\label{Conj}
We conjecture that \eqref{e:ConjPhi}-\eqref{e:ConjRatio} hold for every $\rho>0$ and $\alpha>-1$. 
Furthermore, we conjecture that, 
for every $\rho>0$ and $\alpha>-1$, and for every $n_0\in\N_0$, it holds that 
\begin{align}
\label{e:ConjPhi2}
\sup_{n\geq n_0}\sup_{z\in P_\rho} n^{-\alpha/2+1/4}\re^{2\rho\sqrt{n}}|\Phi^{(\alpha)}_{n}(z)||z|^{n_0+1}<\infty,
\end{align}
\begin{align}
\label{e:ConjL2}
\sup_{n\geq n_0}\sup_{z\in P_\rho} \frac{n^{\alpha/2-1/4}\re^{2\rho\sqrt{n}}|z|^{n_0}}{|L^{(\alpha)}_{n}(z)|}<\infty,
\end{align}
and
\begin{align}
\label{e:ConjRatio2}
\sup_{n\geq n_0}\sup_{z\in P_\rho} \frac{\re^{4\rho\sqrt{n}}|\Phi^{(\alpha)}_{n}(z)||z|^{2n_0+1}}{|L^{(\alpha)}_{n}(z)|}<\infty.
\end{align}
\end{conj}

\begin{rem}
\label{rem:Cong}
Recalling that 
$|\Phi^{(\alpha)}_{n}(z)|=O(|z|^{-n-1})$ and $1/|L^{(\alpha)}_{n}(z)|=O(|z|^{-n})$ as $|z|\to\infty$,
we note that, 
for each of \eqref{e:ConjPhi2}-\eqref{e:ConjRatio2}, for every $n_0\in\N$ the inner supremum over $z\in P_\rho$ is guaranteed to be finite for each fixed $n\geq n_0$. The claim of the second part of Conjecture \ref{Conj} is that in each case one can also take an outer supremum over $n\geq n_0$ and obtain a finite quantity, in spite of the root-exponential prefactor. 
\end{rem}

\begin{figure}[t!]
\centering
\subfloat[$\alpha=0$, $\rho=1$, $n_0=1$]{
\includegraphics[width=.48\textwidth]{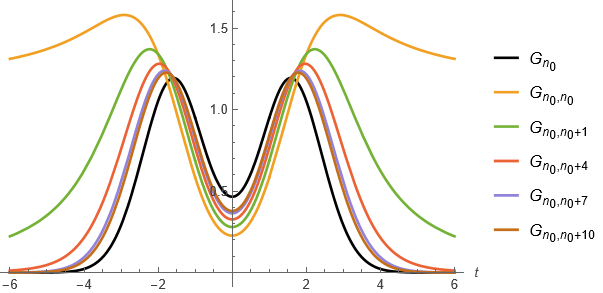}
\hspace{3mm}
\includegraphics[width=.48\textwidth]{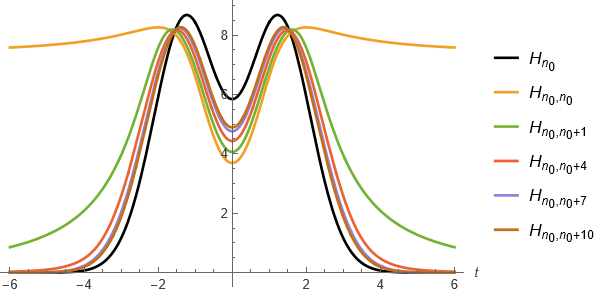}}

\subfloat[$\alpha=1$, $\rho=0.1$, $n_0=5$]{
\includegraphics[width=.48\textwidth]{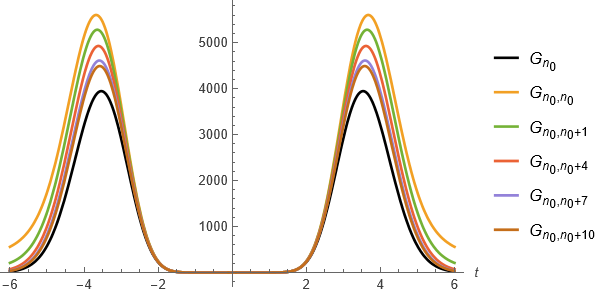}
\hspace{3mm}
\includegraphics[width=.48\textwidth]{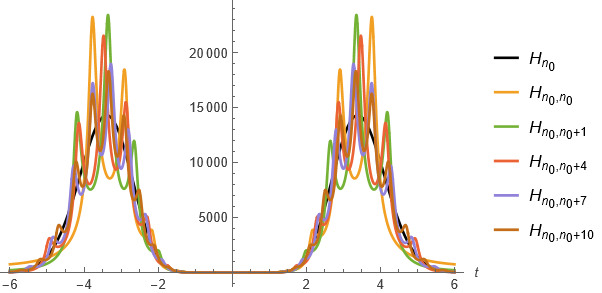}}

\subfloat[$\alpha=-0.5$, $\rho=2$, $n_0=3$]{
\includegraphics[width=.48\textwidth]{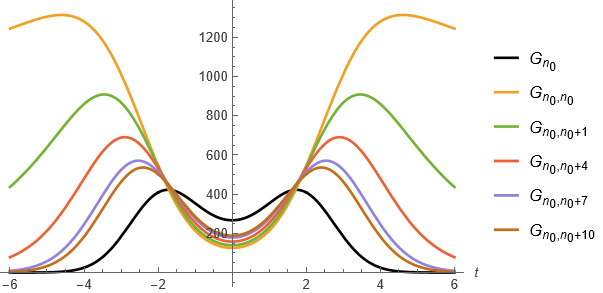}
\hspace{3mm}
\includegraphics[width=.48\textwidth]{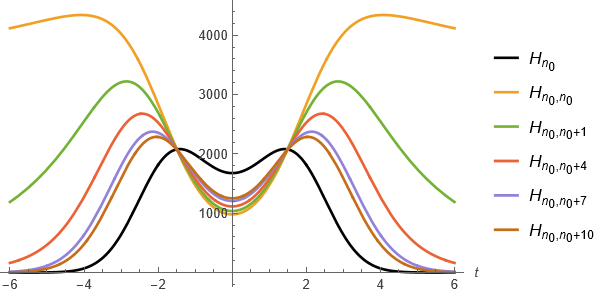}}

\caption{
Plots of $G_{n_0}(z(t))$ and $H_{n_0}(z(t))$ and $G_{n_0,n}(z(t))$ and $H_{n_0,n}(z(t))$ for different values of $\rho$, $\alpha$, $n_0$ and $n$.
\label{f:Conj}
}
\end{figure}

Some numerical evidence in support of Conjecture \ref{Conj} is provided in Figure \ref{f:Conj}. The results presented here are just a sample of many that we have computed for different parameter values, all of which appear to support Conjecture \ref{Conj}. We focus here on results relating to \eqref{e:ConjPhi2} and \eqref{e:ConjL2}, since these two together imply \eqref{e:ConjRatio2}. 
In Figure \ref{f:Conj} we show plots of
\begin{align}
\label{}
G_{n_0,n}(z):=n^{-\alpha/2+1/4}\re^{2\rho\sqrt{n}}|\Phi^{(\alpha)}_n(z)||z|^{n_0+1}
\end{align}
and
\begin{align}
\label{}
H_{n_0,n}(z):=\frac{n^{\alpha/2-1/4}\re^{2\rho\sqrt{n}}|z|^{n_0}}{|L^{(\alpha)}_n(z)|}
\end{align}
for different values of $\rho$, $\alpha$, $n_0$, and $n$, for points $z$ on the part of $P_\rho$ near its apex, i.e.\ for $z=z(t)$ given by \eqref{e:WangParam}, 
for $t$ in an interval containing $0$.
For reference we also plot the functions 
\begin{align}
\label{}
G_{n_0}(z):=\frac{1}{2\sqrt{\pi}}|\re^{-z/2}||z|^{\alpha/2-1/4+n_0+1}
\end{align}
and
\begin{align}
\label{}
H_{n_0}(z):=2\sqrt{\pi}|\re^{-z/2}||z|^{\alpha/2+1/4+n_0},
\end{align}
which are the pointwise limits of $G_{n_0,n}(z)$ and $H_{n_0,n}(z)$ as $n\to\infty$ for fixed $z\in P_\rho$. 
The results in Figure \ref{f:Conj} appear to be consistent with our conjectures \eqref{e:ConjPhi2} and \eqref{e:ConjL2}, which can be written as
\begin{align}
\label{e:ConjPhi3}
\sup_{n\geq n_0}\sup_{z\in P_\rho} G_{n,n_0}(z) <\infty
\end{align}
and 
\begin{align}
\label{e:ConjL3}
\sup_{n\geq n_0}\sup_{z\in P_\rho} H_{n,n_0}(z) <\infty.
\end{align}
However, rigorous justification of these conjectures requires delicate and apparently nontrivial estimation of the special functions $L^{(\alpha)}_n(z)$ and $\Phi^{(\alpha)}_n(z)$ (recall \eqref{e:GLPDef} and \eqref{e:PhiRep}), which we have not yet been able to carry out, and which we leave for future work.

\section{Meromorphic functions}

The aim of \cite{wang_laguerre_2024} was to present error estimates that are valid for general functions $f$ satisfying a regularity condition of the form \eqref{e:fCond}. While the analysis of \cite{wang_laguerre_2024} may not yet be complete, we note that rigorous convergence estimates of the form \eqref{e:FBound1} for the problems considered in Results \ref{r:WangRes} can be derived for meromorphic functions with a finite number of poles and algebraic behaviour at infinity, by expressing the contour integral representation of the  corresponding quantity $F_n(f)$ in \cite{wang_laguerre_2024} as a sum of residues, to which the pointwise asymptotics \eqref{e:Perron} and \eqref{e:PhiAsympt2} can be applied. 

We illustrate this briefly for the case of Gauss-Laguerre quadrature for the function $f(z)=(z-z_0)^{-1}$, which has a single simple pole at $z=z_0$. Assuming that $z_0\not\in \R_+$, let 
\begin{align}
\label{e:IDef}
I:=\int_0^\infty x^{\alpha}\re^{-x}f(x)\,\rd x
\end{align} and let $Q_n$ denote the $(n+1)$-point Gauss-Laguerre quadrature approximation to $I$. Since $f(z)=O(|z|^{-1})$ as $z\to\infty$, arguing as in \cite[Proof of Theorem 6.3]{wang_laguerre_2024} (using \cite[Lemma 4.1]{wang_laguerre_2024}, cf.\ Lemma \ref{l:intlag} below) we have, for sufficiently small $\rho$, and for every $n\in\N$, that 
\begin{align}
\label{e:QuadErr}
I-Q_n = \int_{P_\rho}\frac{\Phi^{(\alpha)}_{n+1}(z)}{L^{(\alpha)}_{n+1}(z)}f(z)\,\rd z.
\end{align}
Furthermore, the fact that $f(z)=O(|z|^{-1})$ as $z\to\infty$ means that the contour in \eqref{e:QuadErr} can be deformed across the pole at $z=z_0$ and off to infinity (recalling that $\frac{\Phi^{(\alpha)}_{n+1}(z)}{L^{(\alpha)}_{n+1}(z)}=O(|z|^{-2n-3})$ as $z\to\infty$), leaving only the residue contribution. The latter can be approximated using a pointwise application of the asymptotics \eqref{e:Perron} and \eqref{e:PhiAsympt2} to give
\begin{align}
 \label{e:QuadErr2}
 |I-Q_n| = 2\pi \left|\frac{\Phi^{(\alpha)}_{n+1}(z_0)}{L^{(\alpha)}_{n+1}(z_0)}\right| \sim 2\pi |z_0|^ \alpha \re^{-4\Re[\sqrt{-z_0}\,]\sqrt{n}} \re^{-\Re[z_0]}, \qquad n\to\infty. 
 \end{align} 
We note that a similar formula was presented in \cite[Equation (2.6)]{barrett1961quad} for the case of a pair of complex conjugate simple poles (cf.\ \cite[Remark 6.4]{wang_laguerre_2024}).
Contour plots of the left- and right-hand sides of \eqref{e:QuadErr2} in the case $\alpha=0$ for two different $n$ values are presented in Figure \ref{f:QuadErr}; they show excellent agreement, especially for the larger value of $n$, provided one is not too close to the positive real axis (we note in particular that the quadrature formula $Q_n$ blows up as $z_0$ approaches one of the quadrature points).
The equi-error curves for the right-hand side of \eqref{e:QuadErr2} are the contours of the function $F(z_0):=4\Re[\sqrt{-z_0}\,]\sqrt{n}+\Re[z_0]$.
For fixed $n$ they are tear-drop shaped, and they are only approximately parabolic ($\Re[\sqrt{-z_0}\,]=const$) in the regime where $\Re[\sqrt{-z_0}\,]\sqrt{n}\gg \Re[z_0]$. 
The deviation from purely parabolic equi-error curves is of course a consequence of the exponential weight in the integral 
\eqref{e:IDef}. The practical implication of \eqref{e:QuadErr2} is that, for integrands with a pole $z_0\in\C\setminus \R_+$ for which $\Re[\sqrt{-z_0}\,]$ (which represents the parameter $\rho$ of the parabola $P_\rho$ passing through $z_0$) is small, 
while the asymptotic rate of convergence ($|I-Q_n|=O(\re^{-4\Re[\sqrt{-z_0}\,]\sqrt{n}})$ as $n\to\infty$) may be slow, the actual quadrature error may be acceptably small if $\Re[z_0]$ is sufficiently large, because of the exponential factor $\re^{-\Re[z_0]}$ in \eqref{e:QuadErr2}.

\begin{figure}[t!]
\centering
\subfloat[]{
\includegraphics[width=.75\textwidth]
{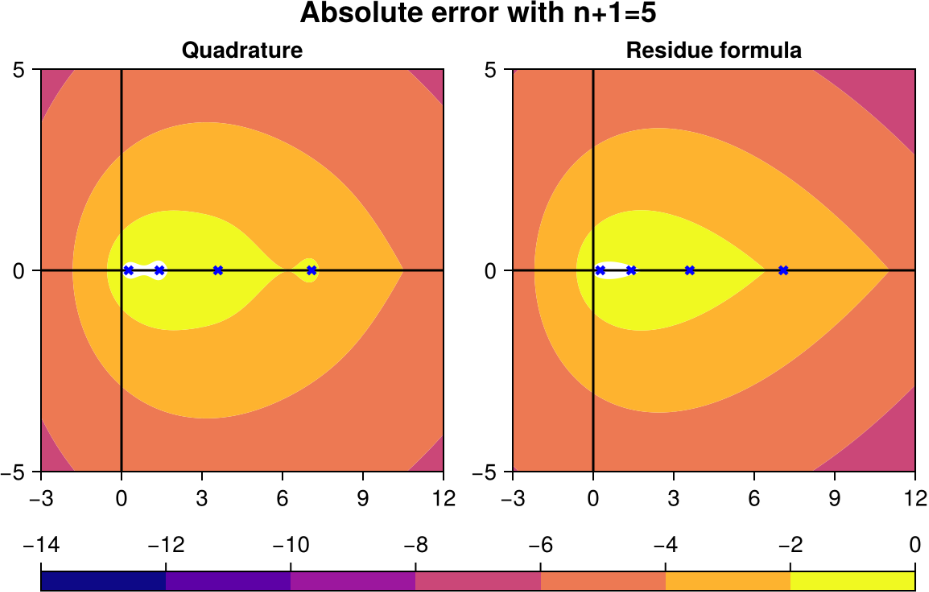}}

\vspace{3mm}
\subfloat[]{
\includegraphics[width=.75\textwidth]%
{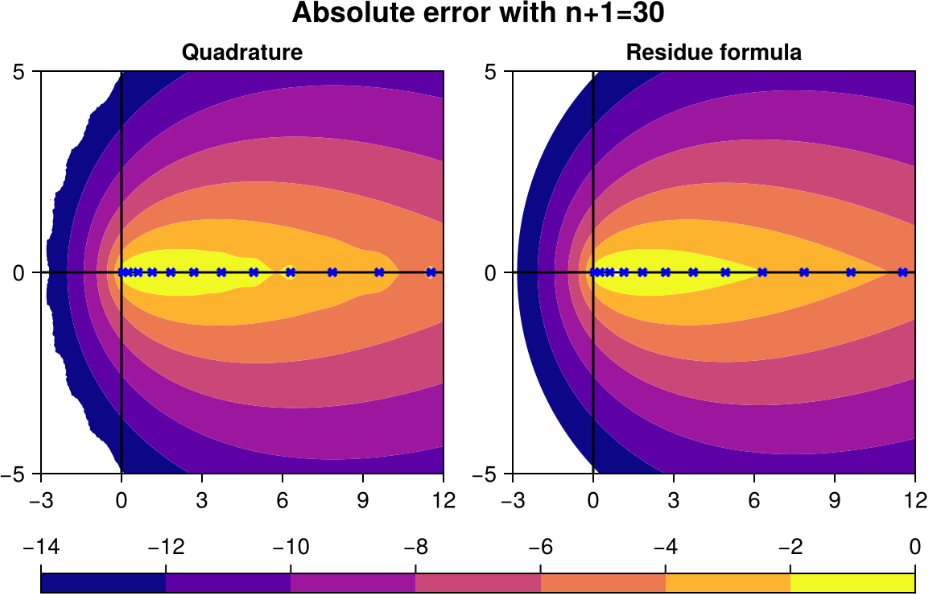}}
\caption{
Contour plots of the left- and right-hand side of \eqref{e:QuadErr2}, as a function of the singularity location $z_0$, showing the absolute error in Gauss-Laguerre quadrature with $\alpha=0$ and $f=(z-z_0)^{-1}$ 
for (a) $n+1=5$ points and (b) $n+1=30$ points.
The quadrature points lying within the plot windows are shown by blue crosses.
\label{f:QuadErr}
}
\end{figure}

\section*{Acknowledgements}
We thank Haiyong Wang and Simon Chandler-Wilde for helpful discussions in relation to this work.

\appendix
\section{Appendix}
\label{s:App}

In this appendix we provide proofs of a number of auxiliary results relating to the analysis in \cite{wang_laguerre_2024}.

\subsection{Proof of \eqref{e:PhiRep}}
\label{s:PhiRep}

The proof of \eqref{e:PhiRep} given in \cite[Lemma 2.1]{wang_laguerre_2024} involved an application of L'H\^opital's rule, the details of which were not made clear. Here we present an alternative, more direct proof of \eqref{e:PhiRep}, based on an application of the Rodrigues formula \cite[(18.5.5)]{NIST:DLMF}
\begin{align}
\label{e:Rod}
L^{(\alpha)}_n(x) = \frac{1}{n!\omega_\alpha(x)}\frac{\rd^n}{\rd x^n}(\omega_\alpha(x)x^n).
\end{align}
First, we substitute \eqref{e:Rod} into \eqref{e:PhiDef} and integrate by parts $n$ times to obtain, for $z\in\C\setminus\R_+$,
\begin{align*}
\Phi^{(\alpha)}_n(z) &= \frac{\ri}{2\pi n!}\int_0^\infty \frac{1}{x-z}\frac{\rd^n}{\rd x^n}
(\omega_\alpha(x)x^n)
\, \rd x. \\
	&= \frac{\ri}{2\pi} \frac{(-1)^n}{n!}\int_0^\infty 
\omega_\alpha(x)x^n
\frac{\rd^n }{\rd x^n}\frac{1}{(x-z)}\, \rd x, \\
&= \frac{\ri}{2\pi} \int_0^\infty 
\frac{
\omega_\alpha(x)x^n	
}{(x-z)^{n+1}}\,\rd x,
\end{align*}
where we note that there are no boundary terms because the function $\omega_\alpha(x)x^n=\re^{-x}x^{n+\alpha}$ and its derivatives up to order $n-1$ vanish at both $x=0$ and $x=\infty$ (recall that $\alpha>-1$).  
Next, we temporarily restrict attention to $z\in(-\infty,0)$ and make the change of variable $x=(-z)t$ to obtain
\begin{align}
\label{e:PhiRepTemp}
\Phi^{(\alpha)}_n(z) 
&= \frac{\ri (-z)^{\alpha}}{2\pi} \int_0^\infty 
\frac{
t^{\alpha+n} \re^{zt}
}{(1+t)^{n+1}}\,\rd t,
\end{align}
for $z\in(-\infty,0)$. Then we combine \eqref{e:PhiRepTemp} with the integral representation \cite[Equation (13.4.4)]{NIST:DLMF} and the connection formula \cite[Equation (13.2.40)]{NIST:DLMF} to obtain \eqref{e:PhiRep} in the case where $z\in(-\infty,0)$. Finally, we apply analytic continuation to extend the validity of \eqref{e:PhiRep} to $z\in \C\setminus [0,\infty)$.

\subsection{Extension of \cite[Lemma 4.1]{wang_laguerre_2024}}
\label{s:Lem4p1}

In the proofs of \cite[Theorems 4.2, 4.3 \& 6.3]{wang_laguerre_2024}, the validity of Step 1 in proof Strategy \ref{st:Strat} is justified by an application of Lemma \ref{l:intlag} below. This result was stated and proved in \cite[Lemma 4.1]{wang_laguerre_2024}, but only for $\beta < 1/2$. Here we extend the result to general $\beta\in \R$. 

\begin{lem}[{Extension of \cite[Lemma 4.1]{wang_laguerre_2024}}]
\label{l:intlag}
Let $\rho>0$ and suppose that $f$ is analytic on $\overline{D_\rho}$ and satisfies $|f(z)|=O(|z|^\beta)$ as $z\to\infty$ inside $D_\rho$ for some $\beta\in \R$. Given $n\in \N$ satisfying $n> \beta-1$, and a set of $n+1$ distinct points $0<x_0<x_1<\dots<x_n<\infty$, let $p_n$ denote the unique polynomial of degree $n$ which interpolates $f$ at these points. Then for $x\in \R_+$ we have
	\begin{equation}
	\label{e:IntErr}
	 f(x)-p_n(x) = \frac{1}{2\pi \ri}\int_{P_\rho} \frac{\phi(x)f(z)}{\phi(z)(z-x)}\dd z,
	 \end{equation} 
	 where $\phi(z) = c\prod_{j=0}^n(z-x_j)$ and $c>0$ is an arbitrary constant.
\end{lem}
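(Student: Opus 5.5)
The plan is to start from the classical Hermite contour-integral formula on a bounded contour and then let the contour grow to $P_\rho$, controlling the arc contributions. Fix $x\in\R_+$ and $R>0$ large. Let $\Gamma_R$ be the closed, positively oriented contour consisting of the part of $P_\rho$ with $|z|\le R$, closed by an arc $A_R$ of the circle $|z|=R$ lying in $\overline{D_\rho}$ (the two arcs joining the upper and lower branches of $P_\rho$ through the positive real axis). The orientation of $P_\rho$ in \eqref{e:IntErr} is taken so that $D_\rho$ lies to its left. For $R>x_n$ and $R>x$, the region enclosed by $\Gamma_R$ lies in $\overline{D_\rho}$ and contains $x,x_0,\dots,x_n$, and $f$ is analytic there.

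First consider the function $z\mapsto \frac{f(z)}{\phi(z)(z-x)}$ for $x\notin\{x_j\}$; the case $x=x_j$ follows by continuity or trivially, since both sides vanish. Its poles inside $\Gamma_R$ are at $z=x$ and at $z=x_j$. The residue theorem gives
\[
\frac{1}{2\pi\ri}\oint_{\Gamma_R}\frac{\phi(x)f(z)}{\phi(z)(z-x)}\,\rd z = f(x)+\phi(x)\sum_{j=0}^n \frac{f(x_j)}{\phi'(x_j)(x_j-x)}.
\]
The sum equals $-p_n(x)$. This is the standard Lagrange identity: write $p_n(x)=\sum_j f(x_j)\frac{\phi(x)}{\phi'(x_j)(x-x_j)}$. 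Hence the integral over $\Gamma_R$ equals $f(x)-p_n(x)$ exactly, independently of $\beta$. This step is routine and is where the restriction $\beta<1/2$ plays no role.

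The remaining step, and the main obstacle, is showing that the contribution of the arc $A_R$ tends to zero as $R\to\infty$, and that the integral over $P_\rho$ converges absolutely. On $A_R$ we have $|f(z)|\le C R^\beta$ and $|z-x|\ge R-x$. Since all $x_j$ are fixed, $|\phi(z)|\ge c\prod_j(R-x_j)\ge c' R^{n+1}$ for large $R$. The arc length is at most $2\pi R$. Hence the arc integral is $O(R^{\beta+1-(n+1)-1})=O(R^{\beta-n-1})$, which tends to zero precisely when $n>\beta-1$. The same bound along $P_\rho$, where the integrand is $O(|z|^{\beta-n-2})$ and $\rd s\approx \rd|z|$, gives absolute integrability under the same condition. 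Letting $R\to\infty$ yields \eqref{e:IntErr}.

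The only points needing care are geometric. One must check that the circle $|z|=R$ meets $P_\rho$ at exactly two points, so that the closing arcs are well defined and lie in $\overline{D_\rho}$, where the growth bound on $f$ applies. This follows from $|z(t)|=t^2+\rho^2$ being monotone in $|t|$ in the parametrisation \eqref{e:WangParam}. One must also track the orientation so that the residue computation matches the sign in \eqref{e:IntErr}.
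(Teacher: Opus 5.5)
Your proposal is correct and follows essentially the same route as the paper. You truncate $P_\rho$, close it inside $\overline{D_\rho}$, apply Hermite's remainder formula (which you re-derive via the residue theorem rather than cite), and show that both the closing piece and the tail of $P_\rho$ vanish when $n>\beta-1$. The only difference is cosmetic: you close with a circular arc $|z|=R$, crudely bounding its length by $2\pi R$, whereas the paper closes with the vertical segment $\Re[z]=\eta$, whose length is only $O(\sqrt{\eta})$. In the paper the binding condition $n>\beta-1$ therefore comes from the tail alone, while in your version it comes from both the tail and the arc.
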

\begin{proof} 
Without loss of generality we can assume that $\beta\geq 0$, since for $\beta<0$ the result follows from the result for $\beta=0$. 
Fix $x\in \R_+$, and assume for the remainder of the proof that 
\begin{align}
\label{e:etaAss}
\eta \geq \max\{2x,2x_n,1,\rho^2\}.
\end{align}
Consider the two curves $\Gamma:=\{z\in P_\rho: \Re[z]\leq \eta\}$ and $\mathcal{V}:=\{z\in D_\rho: \Re[z]= \eta\}$. Their union is the boundary of a bounded domain, which by \eqref{e:etaAss} contains $x$ and all the interpolation points. 
Applying Hermite's formula for the remainder of polynomial interpolation~\cite[Theorem 3.6.1]{davis1975interpolation} gives
\begin{equation}
f(x)-p_n(x) = \frac{\phi(x)}{2\pi \ri}\int_{\Gamma \cup \mathcal{V}} \frac{f(z)}{\phi(z)(z-x)}\dd z.
\end{equation}
Hence, to prove \eqref{e:IntErr} we have to show that 
\begin{align}
\label{e:IntErr1}
\int_{P_\rho\setminus\Gamma} \frac{f(z)}{\phi(z)(z-x)}\dd z \to 0
\qquad \text{and} \qquad
\int_{\mathcal{V}} \frac{f(z)}{\phi(z)(z-x)}\dd z \to 0, \qquad \text{as} \quad\eta\to\infty.
\end{align}

For $z\in \cV\cup (\partial P_\rho \setminus \Gamma)$ and $x'\in\{x,x_0,x_1,\ldots,x_n\}$ we have by \eqref{e:etaAss} that
\begin{align}
\label{e:eta1}
|z-x'|
\geq \eta/2,
\end{align}
and also that 
\begin{align}
\label{e:eta2}
|z-x'|\geq \Re[z]/2 \geq (\Re[z]+\eta)/4\geq (\Re[z]+\rho^2)/4.
\end{align}
Furthermore, the assumption on $f$ implies that there exists $C>0$ such that $|f(z)|\leq C|z|^\beta$ for $|z|\geq 1$. Hence by \eqref{e:etaAss} we have that 
\begin{align}
\label{e:f1}
|f(z)|\leq C|z|^\beta
\end{align}
for $z\in \cV\cup (\partial P_\rho \setminus \Gamma)$. 

To estimate the integral over $\cV$, we first note using \eqref{e:PrhoCart} that the intersection points between $\cV$ and $P_\rho$ are at $z=\eta \pm 2\rho\sqrt{\rho^2+\eta}\,\ri$. Hence $|\cV|=4\rho\sqrt{\rho^2+\eta}$, which, by \eqref{e:etaAss}, gives $|\cV|\leq 4\sqrt{2} \rho \sqrt{\eta}$.
Furthermore, for $z\in\cV$ we have that $|z|\leq \sqrt{\eta^2 + 4\rho^2(\rho^2+\eta)} = \eta+2\rho^2$, 
which by \eqref{e:etaAss} implies $|z|\leq 3\eta$. Combining these observations with \eqref{e:eta1} and \eqref{e:f1} gives
\begin{equation}
\int_{\mathcal{V}} \left|\frac{f(z)}{\phi(z)(z-x)}\right|\,\rd s \leq 
\frac{2^{n+9/2}3^\beta C\rho}{c}\eta^{\beta-3/2-n},
\end{equation}
which implies the first part of \eqref{e:IntErr1}, since $n>\beta-1>\beta-3/2$. 

To estimate the integral over $\partial P_\rho \setminus\Gamma$ we use the parametrisation \eqref{e:WangParam} for $|t|\geq \sqrt{1+\eta/\rho^2}$. For such $t$ we have that 
$|z(t)| = t^2+\rho^2 \leq (1+\rho^4/(\rho^2+\eta))t^2$,
which by \eqref{e:etaAss} gives $|z(t)| \leq (1+\rho^2/2)t^2$. Similarly, for such $t$ we have that $|z'(t)|=2\sqrt{t^2+\rho^2}\leq 2\sqrt{1+\rho^2/2}\,t$. Combining these observations with \eqref{e:eta2} and \eqref{e:f1} gives
\begin{equation}
\label{e:Int1}
\int_{\partial P_\rho \setminus\Gamma} \left|\frac{f(z)}{\phi(z)(z-x)}\right|\,\rd s \leq 
2^{2n+3}\frac{C}{c}(1+\rho^2/2)^{\beta+1/2} \int_{\sqrt{1+\eta/\rho^2}}^\infty t^{2\beta-2n-3}\,\rd t.
\end{equation}
The integral on the right-hand side of \eqref{e:Int1} converges because $n>\beta-1$, and tends to zero as $\eta\to\infty$ because the lower limit of integration tends to infinity as $\eta\to\infty$, implying the second part of  \eqref{e:IntErr1}. 

\end{proof}

\subsection{Estimating the constant $\cK$}
\label{s:ConstEst}

The results in Results \ref{r:WangRes} aim to prove bounds of the form \eqref{e:FBound1}, involving a constant $\cK$ of the form
\begin{align}
\label{e:KDef2}
\mathcal{K}=\int_{P_\rho} |\re^{-az}||z|^{b}|f(z)|\,\rd s
\end{align}
for some $a,b\in\R$ such that \eqref{e:KDef2} converges.  
Under the assumption that $|f(z)|\leq C|z|^\beta$ for $z\in P_\rho$ one can use the following lemma, with $a=1$ and $d=b+\beta$, to derive upper bounds for the value of $\mathcal{K}$ in terms of exponentials and $\Gamma$ functions.  

\begin{lem}
\label{l:KConst}
Let $a>0$ and $d\geq-1/2$. Then 
\begin{align}
\label{e:K1}
\int_{P_\rho} |\re^{-az}||z|^d\,\rd s \leq a^{-1/2} 2^{d+3/2}\re^{a\rho^2}\left(\rho^{2d+1}\sqrt{\pi} + a^{-d-1/2}\Gamma(d+1)\right).
\end{align}
\begin{proof}
Using the parametrization \eqref{e:WangParam} we find that
\begin{align}
\label{e:K2}
\int_{P_\rho} |\re^{-az}||z|^d\,\rd s = 4\re^{a\rho^2} \int_0^\infty \re^{-at^2}(t^2+\rho^2)^{d+1/2}\,\rd t.
\end{align}
Since $d+1/2\geq 0$ we can bound 
$(t^2+\rho^2)^{d+1/2}\leq 2^{d+1/2}(t^{2d+1} + \rho^{2d+1})$
in \eqref{e:K2}, and the result \eqref{e:K1} then follows by application of the standard integral representation for the $\Gamma$ function. 

\end{proof}

\end{lem}

\bibliography{references}
\bibliographystyle{siam}

\end{document}